\theoremstyle{plain}
\newtheorem{theorem}{Theorem}
\newtheorem{proposition}[theorem]{Proposition}
\newtheorem{lemma}[theorem]{Lemma}
\newtheorem{corollary}[theorem]{Corollary}
\theoremstyle{remark}
\newtheorem{remark}[theorem]{Remark}
\theoremstyle{definition}
\newtheorem{definition}[theorem]{Definition}
\newtheorem{example}[theorem]{Example}
\begin{document}

\title{A topological study for the existence of lower-semicontinuous Richter-Peleg multi-utilities \thanks{Asier Estevan acknowledges financial support
 from the Ministry of Economy and Competitiveness of Spain under grants  MTM2015-63608-P and ECO2015-65031 as well as from the Basque Government under grant IT974-16. 
Armajac Ravent\ 'os acknowledges financial support
 from the Ministry of Economy and Competitiveness of Spain under grant ECO2015-65031.
}
}







\maketitle

 \author{G. BOSI \footnote{Dipartimento de Scienze Economiche, Aziendali, Mathematiche e Statistiche\\ Universit\`a degli Studi di Trieste. Piazzale Europa 1, I-34127. Trieste, Italia.
} }

 \author{A. ESTEVAN \footnote{Departamento de Matem\'aticas, Universidad P\'ublica de Navarra,
Campus Arrosad\'{\i}a\\ Pamplona, 31006, Spain\\ asier.mugertza@unavarra.es}}

 \author{Armajac Ravent\ 'os \footnote{Departamento de Matem\'aticas, Universidad P\'ublica de Navarra,
Campus Arrosad\'{\i}a\\ Pamplona, 31006, Spain.}}

\begin{abstract}

In the present paper we study necessary and sufficient conditions for the existence of a semicontinuous and finite Richter-Peleg multi-utility for a preorder. It is well know that, given a preorder on a topological space,  if there is a lower (upper) semicontinuous  Richter-Peleg multi-utility, then the topology of the space must be finer than the Upper (resp. Lower) topology. However, this condition does not guarantee the existence of a semicontinuous representation.

We search for finer topologies which are necessary for semicontinuity, as well as that they could guarantee the existence of a semicontinuous representation. As a result, we prove that Scott topology (that refines the Upper one) must be contained in the topology of the space in case there exists a finite lower semicontinuous  Richter-Peleg multi-utility. However, as it is shown, the existence of this representation cannot be guaranteed.

\end{abstract}

\textbf{keywords: }{Preorders, multi-utility theory, Richter-Peleg, semicontinuity }

\section{Introduction and motivation} \label{s1}

  In the present paper we study the existence of  {\em lower-semicontinuous Richter-Peleg multi-utilities} for preorders on topological spaces. The existence of Richter-Peleg multi-utilities has been recently studied by Alcantud et al. \cite{Alc} (see the introduction in this paper in order to find the motivations for adopting such a representation).
    
 It was already observed that this kind of representations not always exists for preorders endowed with the {\em Upper topology} $\tau_u$ (see Theorem 3.1 in Alcantud et al. \cite{Alc}). On the other hand, it is well known that the weak lower contour sets of the preorder have to be closed in the more general case when there exists a  lower-semicontinuous multi-utility (see e.g. Proposition 2.1 in Bosi and Herden \cite{BosHer1}). Negative conditions for the existence of a finite (Richter-Peleg) continuous multi-utility representations were presented in Alcantud et al. \cite{Alc} and Kaminski \cite{Kam}.
 
 The goal of this paper is to identify some other topologies related to the preorder, with respect to which it is possible to characterize the lower-semicontinuous Richter-Peleg multi-utility representability. Therefore, these topologies have to be finer than the {\em Upper topology}. Some of them are well known in other fields of mathematics, such us the {\em Scott topology} \cite{lattices} in computing, or the {\em Alexandrov topology} in Pure Mathematics \cite{arenas,tim}. Scott topologies have been used in order to characterize the functions between lattices (in particular, dcpo-s) that preserve suprema of directed sets  \cite{lat,lattices}. In  any case, the present paper study the more general case of preorders and their  finite lower-semicontinuous Richter-Peleg multi-utilities, so we do not assume the existence of  suprema and we search for a family of functions that fully characterize the order structure (i.e. a multi-utility instead of a single utility function).

 In this line, we prove that if there exists a finite lower-semicontinuous Richter-Peleg multi-utility for a given preorder, then the topology of the space refines the Scott topology. Thus, we achieve a significant necessary condition for the existence of the desired representation: from now, if we search for finite lower semicontinuous Richter-Peleg multi-utilities we should start from a topological space that refines the Scott topology, and not the Upper.

 Furthermore, we also present an example in order to show that this necessary condition is not sufficient for the general case. Hence, we continue in the study of the adequate topologies to guarantee the existence of the lower-semicontinuous Richter-Peleg multi-utility. For that, we prove that there always exists this kind of representation when the preorder is endowed with a topology that is finer that the Alexandrov  topology. Thus, we achieved a sufficient condition.

Througthout the paper we also focus on some other results related to the topic in order to interact with our present results. For example, several authors have work under the hypothesis in which any linear extension of the preorder is lower-semicontinuous. From a topological point of view, this is strongly related to the Alexandrov topology. Therefore, this kind of topologies cannot be considered strange at all. 

The structure of the paper goes as follows. Section 2 contains the notation and the preliminaries. Section 3 presents necessary conditions for the existence of a (finite) lower semicontinuous multi-utility representation of a preorder. Section 4 is devoted to the sufficient conditions for the existence of such representations of preorders.

 \section{Notation and preliminaries} \label{s2}

 From now on $X$ will stand for a nonempty set.

 \begin{definition} A \emph{preorder} $\precsim$ on $X$ is a binary relation on $X$ which is reflexive and transitive.
 An antisymmetric preorder is said to be an \emph{order} or a \emph{partial order}. A \emph{total preorder} \rm $\precsim$ on a set $X$ is a preorder such that if $x,y \in X$ then $[x \precsim y] \vee [y \precsim x]$. A total order is also called a \emph{linear order}\rm, and a totally ordered set $(X,\precsim)$ is also said to be a \emph{chain}\rm.

 If $\precsim$ is a preorder on $X$, then as usual we denote the associated \emph{asymmetric} relation by $\prec$ and the
associated \emph{equivalence} relation by $\sim$
and these are defined, respectively, by $[x \prec y \iff (x \precsim y) \wedge\neg(y \precsim x)]$ and $[x \sim y \iff (x\precsim y) \wedge (y \precsim x)]$. 
The asymmetric part of a linear order (respectively, of a total preorder) is said to be a \emph{strict linear order} \rm (respectively, a strict total preorder).
 \end{definition}

 Next Definition~\ref{lkeste}
  introduces the notion of representability for  preorders.

 \begin{definition} \label{lkeste} A total preorder $\precsim$ on $X$ is called \emph{representable} \rm if there is
 a
real-valued function $u\colon X\to \mathbb R$ that is order-preserving, so that, for every $x, y \in X$, it holds that $[x \precsim y \iff u(x) \leq u(y)]$. The map $u$ is said to be a \emph{utility function}.
 
In case of not necessarily total preorder,  a
real-valued function $u\colon X\to \mathbb{R}$  is said to be a \emph{Richter-Peleg representation} if it satisfies that $[x \precsim y \Rightarrow u(x) \leq u(y)]$ (i.e. $u$ is \emph{isotonic}) as well as $[x \prec y \Rightarrow u(x) < u(y)]$. In case of a total preorder, this definition coincides with the previous one.
 
 A (not necessarily total) preorder $\precsim$ on a set $X$   is said to have a {\em
multi-utility representation}  if  there exists a family $\mathcal{U}$ of isotonic real functions such that
for all points $x ,y \in X$  the equivalence
\begin{equation} \label{mult1} x \precsim y \Leftrightarrow  \forall u \in {\mathcal U} \,\,(u(x) \leq u(y))\end{equation}
 holds.

A particular case of the previous representation is the so called {\em Richter-Peleg multi-utility representation} (\cite{Min2}), which holds when all the functions of the family $\mathcal U$ in representation (\ref{mult1}) are {\em order-preserving} with respect to the preorder $\precsim$ (i.e., for all $u \in {\mathcal U}$, and $x,y \in X$, $x \prec y$ implies that $u(x) < u(y)$). It is well known that in this case the family $\mathcal U$ also represents the {\em strict part} $\prec$ of $\precsim$, in the sense that, for all $x,y \in X$, $x \prec y$ if and only if $u(x) < u(y)$ for all $u \in {\mathcal U}$.
 \end{definition}

 It is known that a multi-utility representation  exists for every not necessarily total preorder $\precsim$ on $X$ (see Evren and Ok Proposition~1 in \cite{EvO}). However, there are preorders that fails to be Richter-Peleg multi-utility representable (see \cite{Alc}, see also \cite{partial}).

\begin{definition} \label{grande} 
A total preorder $\precsim$ defined on $X$ is said to be \emph{perfectly separable} \rm if there exists a countable subset $D \subseteq X$ such that for every $x,y \in X$ with $x \prec y$ there exists $d \in D$ such that $x \precsim d \precsim y$.
 \end{definition}

Theorem~\ref{qwert} on representability for total preorders is well known \cite{BRME}.

 \begin{theorem}\label{qwert}
 A total preorder $\precsim$ on $X$ is representable if and only if it is perfectly separable.
\end{theorem}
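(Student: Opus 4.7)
The plan is to prove the two implications separately, starting from perfect separability to build a utility and then from a utility to extract a countable order-dense set. For the sufficient direction, I would fix an enumeration $D=\{d_1,d_2,\ldots\}$ of the given countable set and build a utility by weighted counting: define $f_n\colon X\to\{0,\tfrac12,1\}$ by $f_n(x)=1$ if $d_n\prec x$, $f_n(x)=\tfrac12$ if $d_n\sim x$, and $f_n(x)=0$ if $x\prec d_n$, and set $u(x)=\sum_{n\geq 1}2^{-n}f_n(x)$. The routine part is to check that each $f_n$ is monotone under $\precsim$, which is a three-case transitivity argument; this yields $x\precsim y\Rightarrow u(x)\leq u(y)$. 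The substantive point is the strict inequality: given $x\prec y$, perfect separability supplies $d_k$ with $x\precsim d_k\precsim y$, and one verifies by case analysis that $f_k(x)<f_k(y)$ (the troublesome subcase $x\sim d_k\sim y$ is excluded by combining transitivity with $x\prec y$), which forces $u(x)<u(y)$.

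For the necessary direction, I would start from a utility $u\colon X\to\mathbb{R}$ and produce a countable $D\subseteq X$ whose $u$-image meets every interval $[u(x),u(y)]$ with $x\prec y$. First I would take a countable dense subset $Q_0$ of $u(X)$ in $\mathbb{R}$, which exists because $u(X)$ is a separable metric subspace. Then I would adjoin the set $L\subseteq u(X)$ of points isolated from the right (each such point is the left endpoint of a maximal gap of $\mathbb{R}\setminus u(X)$, and since such gaps are pairwise disjoint open intervals they are countable, hence so is $L$) and symmetrically the set $R$ of points isolated from the left. Taking $D$ to consist of one $u$-preimage of each element of $Q_0\cup L\cup R$, the verification splits cleanly: if $u(X)\cap(u(x),u(y))$ is nonempty, density of $Q_0$ produces $d\in D$ with $u(x)<u(d)<u(y)$; otherwise $u(x)\in L$, so I can pick $d\in D$ with $u(d)=u(x)$. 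Either way, totality of $\precsim$ converts $u(x)\leq u(d)\leq u(y)$ into $x\precsim d\precsim y$.

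The only real obstacle is the strict-inequality case of the sufficient direction, because perfect separability only guarantees the weak interpolation $x\precsim d_k\precsim y$ and not a strict one. The device of assigning weight $\tfrac12$ to indifferent pairs is exactly what prevents the $k$-th contribution from collapsing to equality while keeping the monotonicity of every other $f_n$ intact; without that half-step the construction would only recover the non-strict half of the representation.
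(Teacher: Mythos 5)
The paper states this theorem without proof, citing it as well known from Bridges and Mehta \cite{BRME}; your argument is a correct reconstruction of exactly that classical proof (the dyadic sum over an order-dense countable set for sufficiency, and a countable dense subset of $u(X)$ augmented by the countably many gap endpoints for necessity). The one step left tacit is that the implications $x\precsim y\Rightarrow u(x)\leq u(y)$ and $x\prec y\Rightarrow u(x)<u(y)$ together yield the full equivalence, which follows immediately from totality of $\precsim$; with that observation the proof is complete.
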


 \begin{definition} Let $\prec$
denote an
asymmetric binary relation on $(X, \tau)$.   Given $a \in X$ the sets
$L_{\prec}(a) = \lbrace t \in X \ : \ t
\prec a \rbrace $ and $
U_{\prec}(a) = \lbrace t \in X \ :
 \ a \prec t \rbrace $ are called, respectively, the \emph{strict lower and upper contours}
 \rm of $a$ relative to $\prec$. We say that $\prec$ is  \emph{$\tau$-continuous} \rm (or just \emph{continuous})
if for each $a \in X$ the  sets $L_{\prec}(a)$ and $U_{\prec}(a)$
are $\tau$-open.

We will denote the \emph{order topology} generated by $\prec$ as $\tau_{\prec}$, and it is defined by means of the  subbasis provided by the lower and upper contour sets.

Let $\precsim$ denote a
reflexive binary relation on $(X, \tau)$.   Given $a \in X$ the sets
$L_{\precsim}(a) = \lbrace t \in X \ : \ t
\precsim a \rbrace $ and $
U_{\precsim}(a) = \lbrace t \in X \ :
 \ a \precsim t \rbrace $ are called, respectively, the \emph{weak lower and upper contours}
 \rm of $a$ relative to $\precsim$. We say that $\precsim$ is  \emph{ $\tau$-lower semicontinuous} \rm (\emph{$\tau$-upper semicontinuous})
if for each $a \in X$ the  sets $L_{\precsim}(a)$ (resp. $U_{\precsim}(a)$)
are $\tau$-closed.
\end{definition}

\begin{definition}
A preorder $\precsim$ on a set $X$ is said to be {\em near-complete} if every subset of $X$ consisting of mutually incomparable elements is finite.
\end{definition}

The following result was presented by Evren and Ok \cite[Theorem 3]{EvO}.

\begin{theorem} \label{Tok}
Let $X$ be a topological space with a countable basis.  If $\precsim$
is a near-complete upper (lower) semicontinuous preorder on $X$, then it has an upper (lower) semicontinuous
finite multi-utility representation.
\end{theorem}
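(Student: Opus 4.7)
The plan is to prove the theorem in two stages: first build a countably infinite lower-semicontinuous multi-utility representation using second countability, and then compress this family to finitely many functions using near-completeness. The upper-semicontinuous case follows by reversing the preorder.

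For the first stage, I would construct a countable family $\{u_n\}_{n \in \mathbb{N}}$ of lower-semicontinuous isotonic real functions by the standard Urysohn-type technique available on a second countable space. Fixing a countable basis $\{B_n\}$ for $\tau$, one enumerates pairs $(B_n, B_m)$ with $\overline{B_n} \subseteq B_m$ and $B_m$ an upper set of $\precsim$, and for each such pair builds a lower-semicontinuous isotonic function equal (roughly) to $1$ on $\overline{B_n}$ and to $0$ off $B_m$. A careful enumeration guarantees that whenever $x \not\precsim y$, some $u_n$ in the family separates them in the right direction, i.e., $u_n(x) > u_n(y)$.

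The second stage reduces this countable family to a finite one using near-completeness. I would argue by contradiction: if no finite subfamily $\{u_1,\dots,u_k\}$ is already a multi-utility, then for each $k$ there is a witnessing pair $(x_k, y_k) \in X \times X$ with $u_i(x_k) \leq u_i(y_k)$ for every $i \leq k$ while $x_k \not\precsim y_k$; moreover some witness index $j_k > k$ satisfies $u_{j_k}(x_k) > u_{j_k}(y_k)$. Using pigeonhole on whether $y_k \prec x_k$ holds or $x_k, y_k$ are incomparable, and then passing to a suitable subsequence via Ramsey-type arguments on the countably many pairwise comparabilities among the $x_k$'s and $y_k$'s, the aim is to extract an infinite antichain in $X$, contradicting near-completeness.

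The main obstacle lies precisely in this combinatorial extraction: the failure of a finite multi-utility does not immediately produce an infinite antichain, since the $x_k$ or $y_k$ might pairwise be comparable. One must exploit both the structure of the countable multi-utility (in particular, that the witness indices $j_k$ grow with $k$, so that any fixed $u_i$ eventually agrees on the pair) and the restrictive power of near-completeness. A natural tool here is the order-theoretic Erd\H{o}s--Szekeres / Dilworth dichotomy: in the absence of infinite antichains, one is forced to extract infinite chains along which the $u_i$'s must already agree, and this should conflict with the strict separation supplied by the witnesses $u_{j_k}$. A cleaner variant of this strategy would instead produce finitely many lower-semicontinuous linear extensions of $\precsim$ whose intersection recovers $\precsim$ (with the number controlled by near-completeness) and then represent each extension by a single semicontinuous utility via standard results on total preorders over second countable spaces.
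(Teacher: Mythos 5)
First, a point of context: the paper does not prove this statement at all --- it is quoted verbatim from Evren and Ok \cite[Theorem 3]{EvO}, so there is no internal proof to compare against. Judged on its own terms, your proposal has a genuine gap at its central step. The reduction from a countable semicontinuous multi-utility $\{u_n\}$ to a finite one is organized around the premise ``if no finite subfamily of $\{u_n\}$ is a multi-utility, extract an infinite antichain.'' That premise is aimed at the wrong statement: the theorem asserts that \emph{some} finite semicontinuous multi-utility exists, not that one can be found inside a previously constructed countable family. Indeed the implication you want is false. Take $X=\mathbb{R}$ with the usual order and topology (total, hence near-complete, and lower semicontinuous), and the countable lower-semicontinuous multi-utility consisting of the indicator functions of the open up-sets $(q,+\infty)$, $q\in\mathbb{Q}$. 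No finite subfamily represents $\leq$ --- finitely many indicator functions take only finitely many value-vectors and cannot separate infinitely many indifference classes --- yet there is obviously a one-element semicontinuous multi-utility, namely the identity. So your contradiction argument cannot produce an infinite antichain, no matter how the Ramsey/Dilworth extraction is arranged; the failure of finite subfamilies reflects a poor choice of countable family, not incompleteness of the order. A secondary issue: the Urysohn-type construction in your first stage presupposes separation axioms (regularity/normality) and basis elements that are up-sets, neither of which is available under the sole hypothesis of second countability; the standard route is instead to use the indicators of the open sets $X\setminus L_{\precsim}(a)$, which are automatically lower semicontinuous and isotonic, and to extract a countable subfamily by hereditary Lindel\"ofness.

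The one sentence of your proposal that points in a workable direction is the ``cleaner variant'' at the end: use near-completeness (finite width) to write $\precsim$ as the intersection of finitely many total preorders extending it, arrange for each of these completions to inherit semicontinuity, and then represent each by a single semicontinuous utility on a second countable space via Rader-type theorems. That is essentially the architecture of Evren and Ok's actual argument, but in your write-up it appears only as an unsubstantiated afterthought, and the two real difficulties --- that finitely many completions suffice, and that they can be chosen semicontinuous --- are exactly the content of the theorem and are not addressed.
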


The theorem above presents a sufficient condition for the existence of an upper (lower) semicontinuous finite multi-utility; however, there is not a similar result for the case of an upper (lower) Richter-Peleg multi-utility.

\begin{definition}
Let $\precsim$ be a preorder defined on $X$. The \emph{Upper topology} $\tau_U$ is obtained by choosing the closed sets to be the weak lower contour sets (as well as their finite unions and infinite intersections). 
\end{definition}

\begin{definition}
We say that $f\colon (X,\tau)\to \mathbb{R}$  is {\em lower semi-continuous} at $x_0$ if for every $\epsilon >0$ there exists a neighborhood $ U$ of $x_0$  such that $f(x) > f(x_{0})-\epsilon $  for all $x\in U$.
\end{definition}

\begin{remark}  It is known that  $f\colon (X,\tau)\to \mathbb{R}$ is lower semi-continuous at $x_0$ if and only if $f$ is continuous with respect to the Upper topology on the real line associated with the natural (total) order $\leq$ on $\mathbb{R}$ (i.e., $f\colon (X,\tau)\to (\mathbb{R}, \tau_u )$ is continuous). Equivalently, $f$ is lower semi-continuous at $x_0$ if $f^{-1}((-\infty, f(x_0)])$ is closed. This can be expressed too as
    $$ \liminf _{x\to x_{0}}f(x)\geq f(x_{0}). $$\end{remark}

 \begin{definition}

 Let $\precsim$ be a binary relation on $X$. A subset $G\subseteq X$ is said to be an \emph{up-set} if
 $\forall x,y\in X$,  $x\in G $ and $ x\precsim  y$ implies that $ y\in G$.

 Dually, a subset $G\subseteq X$ is said to be a \emph{down-set} if
 $\forall x,y\in X$,  $x\in G $ and $ y\precsim  x$ implies that $ y\in G$.
 \end{definition}

\begin{theorem} \label{rgt}  A total preorder $\precsim$ on a topological space is representable through a continuous utility function if and only if  $\precsim$ is perfectly separable and $\tau$-continuous.
  \end{theorem}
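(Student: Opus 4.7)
\textbf{Proof proposal for Theorem \ref{rgt}.} The equivalence naturally splits into the two implications, and my approach tackles them separately.

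The forward direction would be immediate: if $u\colon(X,\tau)\to\mathbb{R}$ is a continuous utility for $\precsim$, then for every $a\in X$ the sets $L_{\prec}(a)=u^{-1}\bigl((-\infty,u(a))\bigr)$ and $U_{\prec}(a)=u^{-1}\bigl((u(a),+\infty)\bigr)$ are $\tau$-open, whence $\precsim$ is $\tau$-continuous; since $u$ is in particular a utility, $\precsim$ is representable and Theorem \ref{qwert} delivers perfect separability.

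For the converse, assume $\precsim$ is perfectly separable and $\tau$-continuous. Theorem \ref{qwert} then produces a utility $u\colon X\to\mathbb{R}$, but nothing guarantees that $u$ is continuous. The key observation I would exploit is that $\tau$-continuity means exactly that the order topology $\tau_{\prec}$ (generated as a subbasis by the strict contour sets) satisfies $\tau_{\prec}\subseteq\tau$; hence it suffices to produce a utility continuous with respect to $\tau_{\prec}$. A direct attempt with $u$ fails precisely when $u(X)$ contains a ``two-sided gap'', i.e.\ $a<b$ both in $u(X)$ with $(a,b)\cap u(X)=\emptyset$: in that case $u^{-1}\bigl((-\infty,b)\bigr)$ equals the \emph{weak} lower contour set $L_{\precsim}(x_a)$ (for $u(x_a)=a$), which in general is not $\tau_{\prec}$-open.

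My plan is to remove these pathological gaps by invoking Debreu's Open Gap Lemma: there exists a strictly increasing map $g\colon u(X)\to\mathbb{R}$ such that every gap of $v(X):=g(u(X))$ inside its convex hull is an open interval with \emph{neither} endpoint in $v(X)$. Setting $v:=g\circ u$ yields another utility for $\precsim$, and I would then check continuity of $v$ against the subbasis of $\tau_{\prec}$ as follows: given $\beta\in\mathbb{R}$, either $\beta=v(x_0)\in v(X)$ and $v^{-1}\bigl((-\infty,\beta)\bigr)=L_{\prec}(x_0)\in\tau_{\prec}$; or $\beta$ lies in an open gap, in which case I pick $\beta_n\nearrow\beta$ with $\beta_n=v(y_n)$ and write $v^{-1}\bigl((-\infty,\beta)\bigr)=\bigcup_{n}L_{\prec}(y_{n+1})\in\tau_{\prec}$; or $\beta$ lies outside the convex hull of $v(X)$ and the preimage is empty or the whole space. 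A symmetric argument handles $v^{-1}\bigl((\beta,+\infty)\bigr)$, so $v$ is $\tau_{\prec}$-continuous and hence $\tau$-continuous.

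The main obstacle will be the Open Gap Lemma itself, a nontrivial classical result on strictly monotone modifications of subsets of $\mathbb{R}$; once that tool is granted, the remaining verification is routine, essentially translating ``no two-sided jumps'' into ``preimages of open half-lines are unions of strict lower/upper contour sets''.
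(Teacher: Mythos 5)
Your overall strategy is the classical one: the paper does not prove Theorem~\ref{rgt} at all but cites it as well known (Debreu; Bridges--Mehta), and the standard proof in those references is exactly your route --- forward direction via Theorem~\ref{qwert} plus openness of $u^{-1}\bigl((-\infty,u(a))\bigr)$ and $u^{-1}\bigl((u(a),+\infty)\bigr)$, converse via the Open Gap Lemma. Your forward direction is correct as written.

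There is, however, a genuine error in your converse, located precisely where the Gap Lemma is supposed to do its work. You assert that after applying $g$ every gap of $v(X)$ is an open interval with \emph{neither} endpoint in $v(X)$. This is impossible: a gap is a \emph{maximal} nondegenerate interval of the complement of $v(X)$ (bounded in $v(X)$), and if an open gap $(\alpha,\gamma)$ had $\alpha\notin v(X)$, then $[\alpha,\gamma)$ would be a strictly larger interval disjoint from $v(X)$, contradicting maximality. So ``all gaps open'' forces \emph{both} endpoints to lie in $v(X)$, i.e.\ $\alpha=\max\{s\in v(X):s<\beta\}$ and $\gamma=\min\{s\in v(X):s>\beta\}$ are attained. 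Your subsequent argument inherits the mistake: for $\beta$ interior to a gap there is no sequence $\beta_n\nearrow\beta$ with $\beta_n\in v(X)$ (all values of $v$ below $\beta$ are $\le\alpha<\beta$), so the union $\bigcup_n L_{\prec}(y_{n+1})$ cannot be formed as you describe; and had the lower endpoint genuinely failed to be attained, that union would equal $\{x:v(x)<\alpha\}$ and miss the points with $v(x)=\alpha$. The correct (and simpler) treatment of the gap case is $v^{-1}\bigl((-\infty,\beta)\bigr)=\{x:v(x)<\gamma\}=L_{\prec}(x_\gamma)$ with $v(x_\gamma)=\gamma$, and dually $v^{-1}\bigl((\beta,+\infty)\bigr)=U_{\prec}(x_\alpha)$. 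Your ``$\beta_n\nearrow\beta$'' argument is the right one for a case your trichotomy omits, namely $\beta\notin v(X)$ with $\beta=\sup\{s\in v(X):s<\beta\}$ not attained (then $\beta$ lies in no gap and $v^{-1}\bigl((-\infty,\beta)\bigr)=\bigcup\{L_{\prec}(y):v(y)<\beta\}$). Once the cases are reorganized this way the proof closes; as written, the key step rests on a false description of what the Open Gap Lemma delivers.
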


Theorem~\ref{rgt} on continuous representability is also well-known in this lite\-ra\-tu\-re \cite{de,Debr,BRME}. 

\begin{corollary}
A preorder $\precsim$ on $(X, \tau)$ is $\tau$-lower semicontinuous if and only if the topology $\tau$ is finer than the Upper topology $\tau_u$.
\end{corollary}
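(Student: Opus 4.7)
The statement is essentially a direct translation of the definition of the Upper topology $\tau_u$, so the plan is just to unpack both sides in the right order and observe that they match.

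For the ``if'' direction, suppose $\tau$ is finer than $\tau_u$. By the definition of the Upper topology, each weak lower contour set $L_\precsim(a)$ is $\tau_u$-closed (these sets are precisely the subbasis for the closed sets of $\tau_u$). Since $\tau\supseteq\tau_u$, every $\tau_u$-closed set is $\tau$-closed, so in particular each $L_\precsim(a)$ is $\tau$-closed. By definition this means $\precsim$ is $\tau$-lower semicontinuous.

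For the ``only if'' direction, suppose $\precsim$ is $\tau$-lower semicontinuous, so that every $L_\precsim(a)$ is $\tau$-closed. I would then argue that every $\tau_u$-closed set is $\tau$-closed: the closed sets of $\tau_u$ are, by definition, generated from $\{L_\precsim(a):a\in X\}$ by taking finite unions and arbitrary intersections. Since the family of $\tau$-closed sets is stable under the same operations and contains each $L_\precsim(a)$, it contains every $\tau_u$-closed set. Passing to complements yields $\tau_u\subseteq\tau$.

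There is no real obstacle here; the only small subtlety worth checking explicitly is that the definition of $\tau_u$ via ``weak lower contour sets together with their finite unions and infinite intersections'' indeed produces a topology (one should verify that $\emptyset$ and $X$ sit among the closed sets, which is immediate: $X=L_\precsim(a)\cup L_\precsim(a)^c$ is handled by the convention that the empty intersection gives $X$, and $\emptyset$ is the empty union). Everything else is bookkeeping with the closure operations, and once the subbasis is handled the two implications are essentially one-liners.
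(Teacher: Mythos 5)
Your argument is correct and is exactly the definitional unpacking that the paper has in mind: the paper states this corollary without proof, treating it as an immediate consequence of the definition of $\tau_u$ (closed sets generated by the weak lower contour sets under finite unions and arbitrary intersections) and of $\tau$-lower semicontinuity. Your only-if direction correctly notes the one point worth making explicit, namely that the $\tau$-closed sets are stable under the same generating operations, so nothing is missing.
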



\begin{definition}
Let $(X,\precsim)$ be a preordered set. The \emph{Alexandrov's  topology} $\tau_A$ on $X$ is defined by choosing the open sets to be the up-sets:
 $$ \tau_A =\{ \,G\subseteq X:\forall x,y\in X\, (x\in G \land  x\precsim  y)\, \Rightarrow\, y\in G\,\}$$


The corresponding closed sets are the down-sets:
$$ \{\,S\subseteq X:\forall x,y\in X \, (x\in S\, \land \, y\precsim  x)\, \Rightarrow\, y\in S\,\}$$
\end{definition}

\begin{definition}\label{DScott}
Let $(X,\precsim)$ be an ordered set. The \emph{Scott  topology} $\tau_S$ on $X$ is defined by choosing the open sets to be the up-sets that satisfy the following condition (for any directed set $(x_i)_{i\in I}\subseteq X$):
$$  \sup (x_i)_{i\in I}=s \in U \Rightarrow (x_i)_{i\in I}\cap U\neq \emptyset  .$$

The definition is analogous in the case of preordered sets, taking into account that now the supremum is unique (in case it exists) except indifference (i.e. equivalence). Hence, equivalent elements are topologically indistinguishable (i.e. they share the same open neighborhoods, see \cite{subm}) in the Scott topology.
 \end{definition}

It is straigtforward to see that the Upper topology is contained in the Alexandrov topology. The following example shows that this inclusion may be strict.

\begin{example}\label{Ex1}
Let $X$ be the infinite union $\bigcup_{n\in \mathbb{N}} X_n$ where $X_n=[0,+\infty)$ for each $n\in \mathbb{N}$ (we denote $X_n$ by $[0,\infty)_n$ and by $x_n$ any element of $X_n$).
Now we define the preorder $\sqsubseteq $ on $X$ by $x\sqsubseteq y $ if and only if $x,y\in X_k$ and $x\leq y$, with $k\in \mathbb{N}$. Hence, $x $ and $y$ are incomparable for any $x\in X_m$ and $y\in X_n$, for any $n\neq m$.

On this preordered set, notice that the subset $A=[1,+\infty)_1\cup X_2\cup\cdots\cup X_n\cup\cdots$ (since it is an up-set) is open in the Alexandrov topology, whereas it fails to be open in the Upper topology (there is no open neighbourhood of the element $1_1$ contained in $A$). Notice too that the subset $B= \bigcup_{n\in \mathbb{N}} (0,+\infty)$, for example, is also open in the Alexandrov topology, whereas it fails to be open in the Upper topology.

The reason that makes $A$ fail to be open in the Upper topology is that not every up-set can be open, only those of the kind $X\setminus \bigcup_{i=1}^n L_\sqsubseteq (a^i) $  ($a^i\in X$) are open in the Upper topology. So, in particular, notice that $A_{|X_1}=[1,+\infty)$ is not open even if we are just working on the set $X_1=[0, +\infty)$ with the usual order $\leq$.

On the other hand, the reason that makes $B$ fail to be open in the Upper topology is that the arbitrary intersection of open sets fails to be open (this property is satisfied by the Alexandrov topology, but not by the Upper).  Thus, for any open set $U\in\tau_u$ there are an infinite number of bottom elements $0_k$.
\end{example}

\section{Necessary properties for semi-continuity}

It is  known \cite{Alc} {\color{red} CITE} that the $\tau$-lower semicontinuity of the preorder is not enough in order to warrant the existence of a (lower-semicontinuous) Richter-Peleg multi-utility.

 The following example shows that 
even dealing with a preorder which is Richter-Peleg multi-utility representable finitely, the corresponging finite semicontinuous multi-utility respresentantion fails to exist.

\begin{example}\label{ceupper}
Let $\mathbb{R}_1$, $\mathbb{R}_2$ be two copies of the real line $\mathbb{R}$, and consider the set $X=\mathbb{R}_1\cup\mathbb{R}_2$ 
endowed with the Upper topology associated to the  partial order $\precsim$ defined as follows

$$ x\precsim y \iff \{ x\leq y, \,  x,y\in \mathbb{R}_i, \, i=1,2  \quad\text{ or } \quad x\leq y, \,  x\in (-\infty, 0)_i, \, y\in [0,+\infty)_j, \, i\neq j.\}  $$

It is easy to check that this preorder can be represented by means of a finite lower semicontinuous multi-utility, for instance through the following two functions:

$$u(x)=  \left\{  \begin{array}{lcl}
x& ;&x \in \mathbb{R}_1 \cup [0, +\infty)_2\\
0& ;& x \in (-\infty, 0)_2
\end{array}\right.
\qquad
v(x)=  \left\{  \begin{array}{lcl}
x& ;&x \in \mathbb{R}_2 \cup [0, +\infty)_1\\
0& ;& x \in (-\infty, 0)_1
\end{array}\right.
$$

\begin{figure}[htbp]
\begin{center}
\begin{tikzpicture}[scale=0.8]
\draw[thick] (-1.5,0.2) node[anchor=east] {$\mathbb{R}_1$};
\draw[dashed] (-1.7,0) -- (-1,0);
\draw[dashed] (0.85,0) -- (3.85,0);
\draw[dashed] (0.85,-2) -- (3.85,-2);
    \draw[] (-1,0) -- (0.85,0);
    \draw[] (3.85,0) -- (5,0);
\draw[dashed] (5,0) -- (6,0);

\draw[thick] (0.9,0.25) node[anchor=east] {\small $0_1$};



\draw[thick] (4.25,0.25) node[anchor=east] {\small $0_1$};

\draw[thick] (5.25,0) node[anchor=east] { $\bullet$};
\draw[thick] (5.5,0.25) node[anchor=east] {\small $1_1$};
\draw[thick] (3.95,0) node[anchor=east] {\small $[$};
\draw[thick] (3.95,-2) node[anchor=east] {\small $[$};
\draw[thick] (4,0) node[anchor=east] { $\bullet$};

\draw[thick] (5.25,-2) node[anchor=east] { $\bullet$};
\draw[thick] (5.5,-1.75) node[anchor=east] {\small $1_2$};

\draw[dashed] (0.85,0) -- (3.85,-2);
\draw[dashed] (0.85,-2) -- (3.85,0);

\draw[thick] (-1.5,-1.8) node[anchor=east] {$\mathbb{R}_2$};
\draw[dashed] (-1.7,-2) -- (-1,-2);
    \draw[] (3.85,-2) -- (5,-2);
    \draw[] (-1,-2) -- (0.85,-2);
\draw[dashed] (5,-2) -- (6,-2);
\draw[thick] (0.9,-1.75) node[anchor=east] {\small $0_2$};
\draw[thick] (1,-2) node[anchor=east] {\small $)$};
\draw[thick] (1,0) node[anchor=east] {\small $)$};


\draw[thick] (4.25,-1.75) node[anchor=east] {\small $0_2$};
\draw[thick] (4,-2) node[anchor=east] { $\bullet$};
\end{tikzpicture}
\caption{Preorder defined on $\mathbb{R}\times \{1,2\}$.}
\label{figureCounterExample}
\end{center}
\end{figure}
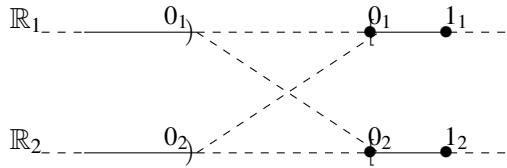

However, there is not a finite lower semicontinuous Richter-Peleg multi-utility for this preordered set and with respect to the Upper topology. To see that, first notice that for any $x_i\in (-\infty,0)_i$, the sequence $(-\frac{1}{n})_{n\in \mathbb{N}}$ contained in $ (-\infty,0)_j$ converges to $x_i$, as well as $-\frac{1}{n}\bowtie x_i$ and $-\frac{1}{n}\bowtie x_i+\epsilon$, for some $\epsilon>0$ and  $i\neq j$.
Hence, for any $n\in\mathbb{N}$, there is a function $u$ of the multi-utility such that $u(x_i)<u(x_i+\epsilon)<u(-\frac{1}{n})$. Furthermore, since the amount of functions is finite, it actually holds that there is a function $u$ and an infinite subset $M\subseteq \mathbb{N}$ such that $u(x_i)<u(x_i+\epsilon)<u(-\frac{1}{n})$ for any $n\in M\subseteq \mathbb{N}$. Thus, $u(x_i)<\liminf u(\frac{1}{n})$, so $u$ fails to be lower semicontinuous.
\end{example}
\medskip

From the examples above, we are able to extract the following conditions that must be satisfied for the existence of the desired representation.

\begin{proposition}

Let $\precsim$ be a preorder on a topological space $(X,\tau)$. Assume that there exists a lower-semicontinuous Richter-Peleg multi-utility. Let $(x_i)_{i \in I}$ be a 
net in $X$.
If $(x_i)_{i \in I}$ converges to $a$ and there is $b$ such that $b\bowtie x_i$ ($\forall i>i_0$), then $\neg (b\prec a)$ or the multi-utility is infinite.


\end{proposition}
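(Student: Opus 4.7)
The plan is to argue by contradiction, assuming the multi-utility $\mathcal{U}$ is finite and that $b \prec a$, and to produce a subnet on which a single lower semicontinuous function of $\mathcal{U}$ is bounded above by $u(b) < u(a)$, contradicting lower semicontinuity at $a$.

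First, since $b \bowtie x_i$ for every $i > i_0$, in particular $\neg (b \precsim x_i)$, so by the definition of a multi-utility there exists some $u \in \mathcal{U}$ with $u(x_i) < u(b)$. Hence, for each $i > i_0$ we may \emph{choose} an index $u^{(i)} \in \mathcal{U}$ satisfying $u^{(i)}(x_i) < u^{(i)}(b)$. Writing $\mathcal{U} = \{u_1,\dots,u_n\}$ and partitioning the tail $\{i \in I : i > i_0\}$ according to which $u_k$ was selected, we get finitely many subsets $I_1,\dots,I_n$ whose union is cofinal in $I$. By the directedness of $I$ and the finiteness of the partition, at least one $I_k$ must itself be cofinal: otherwise every $I_k$ would admit an index $j_k \in I$ beyond which it is empty, and then a common upper bound $j$ of $j_1,\dots,j_n$ would have no successor inside $\bigcup_k I_k$, contradicting the tail $\{i > i_0\}$ being cofinal.

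Fix such a $u^* \in \mathcal{U}$ and the cofinal subset $I^* \subseteq I$ with $u^*(x_i) < u^*(b)$ for all $i \in I^*$. The restriction $(x_i)_{i \in I^*}$ is a subnet of $(x_i)_{i \in I}$ and still converges to $a$. Now use the hypothesis $b \prec a$: since $u^*$ is a Richter-Peleg representation, $u^*(b) < u^*(a)$. Combining these two inequalities, $u^*(x_i) < u^*(b) < u^*(a)$ for every $i \in I^*$.

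Finally, lower semicontinuity of $u^*$ at $a$ yields
\[
\liminf_{i \in I^*} u^*(x_i) \geq u^*(a),
\]
while the chain above gives $\liminf_{i \in I^*} u^*(x_i) \leq u^*(b) < u^*(a)$, a contradiction. Hence, if $\mathcal{U}$ is finite, we must have $\neg (b \prec a)$. The only subtle step is the pigeonhole on a net — making sure one of the finitely many pieces of the tail is actually cofinal, not just infinite — and this is precisely where the finiteness of $\mathcal{U}$ is used; everything else is a direct use of the Richter-Peleg and lower semicontinuity hypotheses.
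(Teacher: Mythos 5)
Your proof is correct and follows essentially the same route as the paper's: assume $b\prec a$ and finiteness, use $\neg(b\precsim x_i)$ to pick for each $i$ a function with $u(x_i)<u(b)$, pigeonhole down to one function $u^*$ and a cofinal subnet, and contradict lower semicontinuity of $u^*$ at $a$ via $u^*(x_i)<u^*(b)<u^*(a)$. If anything, your version is tidier: you justify the cofinality step explicitly (the paper just asserts the subnet exists) and you omit the auxiliary function $v$ with $v(x_i)>v(b)$ that the paper extracts but never uses.
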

\begin{proof}

By reduction to the absurd, if there is $b\in X$  such that $b\prec a$, then $u(b)<u(a)$ is satisfied for any function $u$ of the multi-utility.
On one hand,
if $(x_i)_{i \in I}$ converges to  $a$  and $u$ is a lower semicontinuous function, then it holds that $\liminf u(x_i)\geq u(a)$. On the other hand, if $x_i \bowtie b$ for any $i\in I$ then, for each $i\in I$ there must be two functions $u_i$ and $v_i$ in the multi-utility such that $ u_i(x_i)<u_i(b)$ as well as $ v_i(x_i)>v_i(b)$.

Thus, if the amount of functions is finite, then there is a subnet $(x_j)_{j\in J\subseteq I}$ and two functions $u$ and $v$ such that  $ u(x_j)<u(b)$ and $ v(x_j)>v(b)$, as well as $u(b)<u(a)$ and $v(b)<v(a)$. Hence, $ \liminf u(x_i)\leq u(b)< u(a)$, so $u$ fails to be lower semicontinuous at $a$, arriving to the desired contradicition.

\end{proof}

\medskip

Since it is necessary to ask for some properties to the Upper topology in order to achieve a lower-semicontinuous Richter-Peleg multi-utility, we decide to study some other topologies (finer than the Upper). Due to this deliberation, we achieve the following result.

\begin{theorem}
Let $\precsim$ be a preorder on a topological space $(X,\tau)$.  If there exists a finite lower-semicontinuous Richter-Peleg multi-utility, then $\tau$ is finer than the Scott topology $\tau_{Scott}$.  However,  this latter condition is not sufficient in order to guarantee the existence of a finite lower-semicontinuous Richter-Peleg multi-utility.
\end{theorem}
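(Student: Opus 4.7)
My plan is to prove the two assertions separately. For the necessary condition $\tau_{Scott}\subseteq\tau$, I would argue by contradiction. Let $\{u_1,\ldots,u_n\}$ be a finite lower-semicontinuous Richter--Peleg multi-utility, let $U$ be Scott-open, and suppose some $a\in U$ is not a $\tau$-interior point of $U$. Then there is a net $(x_i)_{i\in I}\to a$ in $\tau$ with $x_i\in F:=X\setminus U$. Because $U$ is an up-set, $F$ is a down-set, so $x_i\not\succsim a$ for every $i$.

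The first technical step I would prove is a sharpening of the preceding proposition: under finiteness of the multi-utility, for every $b\prec a$ one has $b\precsim x_i$ \emph{eventually}. This is a pigeonhole on $\{u_1,\ldots,u_n\}$: if $b\not\precsim x_i$ cofinally, some $u_{k_0}$ would satisfy $u_{k_0}(x_i)<u_{k_0}(b)<u_{k_0}(a)$ on a cofinal subnet, contradicting $\liminf u_{k_0}(x_i)\geq u_{k_0}(a)$ by lower semicontinuity at $a$. As an immediate consequence, since $F$ is a down-set, $L_\prec(a)\subseteq F$.

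Next I would use this eventual-order lemma to construct a directed subset $D\subseteq F$ with $\sup D=a$, contradicting the Scott-closedness of $F$. The natural construction is a diagonal extraction of a cofinal monotone sub-net $(x_{i_k})$: start from any $i_1\in I$, and given $i_{k-1}$ with $x_{i_{k-1}}\prec a$, apply the eventual-order lemma to $b=x_{i_{k-1}}$ to pick $i_k\geq i_{k-1}$ with $x_{i_{k-1}}\precsim x_{i_k}$, diagonalising so that $(i_k)$ is cofinal in $I$. A short computation with the multi-utility then gives $\sup_k u_l(x_{i_k})=u_l(a)$ for each $l$ (lower bound from $\liminf$, upper bound from isotonicity), so $a=\sup_k x_{i_k}$ in $\precsim$ up to equivalence. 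Scott-closedness of $F$ forces $a\in F$, a contradiction.

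For the non-sufficiency assertion I would exhibit an explicit preordered topological space $(X,\tau,\precsim)$ with $\tau\supseteq\tau_{Scott}$ that admits no finite lower-semicontinuous Richter--Peleg multi-utility, adapting Example~\ref{ceupper} by replacing the Upper topology there with the Scott topology (or any intermediate refinement) and checking that the same pigeonhole obstruction continues to rule out a finite lower-semicontinuous Richter--Peleg representation. The main obstacle I expect is the monotone extraction step above: the eventual-order lemma only gives monotonicity relative to elements strictly below $a$, so if cofinally many $x_i$ satisfy $x_i\bowtie a$, the diagonal procedure must be refined, either by showing that such a subnet cannot occur under the finiteness hypothesis (a separate pigeonhole application to the patterns of signs of $u_k(x_i)-u_k(a)$) or by choosing at each stage a proxy element strictly below $a$ that dominates the current tail.
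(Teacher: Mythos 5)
Your overall strategy for the necessity half matches the paper's: a pigeonhole over the finite family combined with $\liminf u_k(x_i)\geq u_k(a)$, followed by the construction of a directed set sitting inside the Scott-closed complement whose supremum is (equivalent to) $a$, contradicting Scott-openness. The eventual-order lemma is correct as stated, and your remark that $\sup_k u_l(x_{i_k})=u_l(a)$ for every $l$ forces $a$ to be a supremum of the chain (via the multi-utility equivalence) is sound. But the argument is not complete: the monotone extraction only runs as long as you can keep producing indices with $x_{i_k}\prec a$, and, as you yourself note, cofinally many $x_i$ may be incomparable to $a$ --- all that $F$ being a down-set gives you is $\neg(a\precsim x_i)$. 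Neither of your two suggested repairs is carried out, so the necessity half has an open gap exactly at its crux. (For what it is worth, the paper's own proof negotiates the same point by asserting, without justification, that points of $V^{1/n}\setminus U$ can be chosen to form an increasing sequence strictly below $x$; the difficulty you flag is real and is also the weak spot of the published argument.)

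The non-sufficiency half is the more serious problem. The obstruction in Example~\ref{ceupper} rests on the fact that $(-\frac{1}{n})_j$ converges to $x_i\in(-\infty,0)_i$ in the Upper topology, and this convergence fails in the Scott topology of that preorder: the directed set $\{(-\frac{1}{n})_j\}_{n}$ has no supremum there (its minimal upper bounds $0_1$ and $0_2$ are incomparable), so the Scott condition imposes nothing on it, and one can exhibit Scott-open up-sets containing $x_i$ that miss the entire sequence, e.g. $(x_i-1,\infty)_i\cup[0,\infty)_j$. Hence ``replacing the Upper topology by the Scott topology'' in that example does not reproduce the obstruction, and your plan for this half does not go through as described. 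The paper instead constructs a fresh example (Example~\ref{cescott}) on $(-\infty,0)\cup\{1\}\cup[2,+\infty]$, engineered precisely so that the sequence $(-\frac{1}{n})$ has supremum equal to $2$ (the only element in the gap being the incomparable point $1$), which forces Scott-convergence to $2$ and lets the finite pigeonhole run against the witness $1\prec 2$, $1\bowtie-\frac{1}{n}$.
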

\begin{proof}
Let's see that any open set $U$ in the Scott topology is also open in $\tau$. That is,
let's see that  any up-set $U$ satisfying that $ \{ \sup (x_i)_{i \in I}=s \in U \Rightarrow (x_i)_{i \in I}\cap U\neq \emptyset \} $  (for any directed set $(x_i)_{i\in I}$) is contained in $\tau$. To see that, we shall prove that $U$ is an open neighbourhood of any of its points.

Let $x$ be any point of $U$. Since each function $u_k$ of the multi-utility $\mathcal{U}=\{u_k\}_{k=1}^N$ is lower semicontinuous, then for any $\epsilon> 0$ there exists an open neighbourhood $V_k^{\epsilon}$ of $x$ such that $u_k(V_k)\subseteq (u_{k}(x)-\epsilon, +\infty)$.
Now, we define the open set $V^\epsilon=\bigcap_{k=1}^N   u_k^{-1}((u_{k}(x)-\epsilon, +\infty))$.
Notice that for any $y\in \bigcap_{k=1}^N   u_k^{-1}([u_{k}(x), +\infty))$ it holds that $x\precsim y$. Dually,  for any $y\in \bigcap_{k=1}^N   u_k^{-1}((-\infty, u_{k}(x)])$ it holds that $y\precsim x$.
\medskip

We distinguish two cases:

\begin{itemize}
\item[$(i)$] If there is one $\epsilon_0>0$ such that $V^{\epsilon_0}\subseteq U$, then we conclude that $U$ is an open neighbourhood of $x$, finishing our proof.
\item[$(ii)$] If case $(i) $ does not hold, then for any $\epsilon>0$ it holds that $V^{\epsilon} \nsubseteq U$. Hence, for each $\epsilon=\frac{1}{n}$ ($n\in \mathbb{N}$) we can construct an increasing sequence $(x_n)_{n\in M\subseteq \mathbb{N}}$ such that each $x_n$ is in $ \bigcap_{k=1}^N u_k^{-1}((u_{k}(x)-\frac{1}{n}, +\infty))\setminus  u_k^{-1}((u_{k}(x)-\frac{1}{n+1}, +\infty))$. Notice that $x_n\prec x$ for any $n\in \mathbb{N}$, so $x$ is an upper bound of the sequence. Observe too that $\sup (u_k(x_n))_{n\in \mathbb{N}}= u_k(x).$

Now, we distinguish the following cases:

\begin{itemize}
\item[$(a)$] If $\sup (x_n)_{n\in \mathbb{N}}=\bar{x}\in U$, then we arrive to the absurd $(x_n)_{n\in\mathbb{N}}\cap U\neq \emptyset.$ That is, this case cannot hold.
\item[$(b)$] If  $\sup (x_n)_{n\in \mathbb{N}}=\bar{b}$, then $b\precsim x$. If $b\prec x$, then $u_k(b)<u_k(x)$ for any $k=1,...,n$. Thus, there is an $\epsilon_0=\min \{u_k(x)-u_k(b)\}_{k=1}^N >0$ such that $u_k(x_n)<u_k(x)-\epsilon_0$ for any $n\in \mathbb{N}$, which is absurd. That is, there is no element $b$ such that $x_n\prec b\prec x$.
If $b\sim x$, then $b\in U$ as in case (a) (remember that, according to Definition~\ref{DScott}, in that case $b$ and $x$ are indistinguishable, so they share the open neighborhoods).
\item[$(c)$]  If  $\sup (x_n)_{n\in \mathbb{N}}$ does not exist, then (since there is no element $b$ such that $x_n\prec b\prec x$) there must be  element $b$ such that $x_n\prec b$ for any $n\in \mathbb{N}$
as well as $b\bowtie x$ (otherwise $x$ would be the supremum and that would be the aforementioned case $(a)$). So, there is a function $u_j\in \mathcal{U}$ such that $u_j(b)<u_j(x)$. Hence, there is an $\epsilon_0= \{u_j(x)-u_j(b)\}_{k=1}^n >0$ such that $u_j(x_n)<u_j(x)-\epsilon_0$ for any $n\in \mathbb{N}$, so $ \liminf u_j(x_n)<u_j(x) $. Hence,  $u_j$ fails to be lower semicontinuous, arriving to a contradiction.
\end{itemize}
\end{itemize}

To conclude the proof, we show in the following example that, even if the topology $\tau$ is the Scott topology, that does not guarantee the existence of a finite lower semicontinuous Richter-Peleg multi-utility.
\end{proof}


\begin{example}\label{cescott}
Let $X=\{ (-\infty, 0)\cup \{1\}\cup  [2, +\infty]\}$ be a set endowed with the Scott topology associated to the  partial order $\precsim$ defined as follows:

$$ x\precsim y \iff x\leq y, \, \forall y\in X\setminus\{1\},  \, \forall x\in X, \quad\text{ and } \quad 1\bowtie y, \,\, \forall y<0. $$

Let's see that there is no  finite lower semicontinuous  Richter-Peleg multi-utility for this preordered set.

Let $\mathcal{U}$ be a finite Richter-Peleg multi-utility.
First, notice that the sequence $(-\frac{1}{n})_{n\in\mathbb{N}}$ converges to 2, as well as $1\prec 2$ and $1\bowtie -\frac{1}{n}$ for any $n\in\mathbb{N}$. Hence, for any $n\in\mathbb{N}$, there is a function $u_n$ of the multi-utility such that $u(-\frac{1}{n})<u(1)<u(2)$. Furthermore, since the amount of functions is finite, it actually holds that there is a function $u$ and an infinite subset $J\subseteq \mathbb{N}$ such that $u(-\frac{1}{n})<u(1)<u(2)$ for any $n\in J\subseteq \mathbb{N}$. Thus, $\liminf u(-\frac{1}{n})<u(2)$, so $u$ fails to be lower semicontinuous at 2.

\end{example}



\section{Sufficient conditions}

Let's see an interesting property satisfied by Alexandrov topologies, but not by the Upper nor by the Scott topologies.

\begin{lemma}\label{Lgiltza}
Let $\sqsubseteq$ and $\precsim$ two preorders on $X$  and $\tau_A^\sqsubseteq$ and $\tau_A^\precsim$ the corresponding Alexandrov topologies. If $\precsim$ refines $\sqsubseteq$ (i.e.  $\sqsubseteq \subseteq \precsim$), then  $\tau_A^\precsim\subseteq \tau_A^\sqsubseteq$.

\end{lemma}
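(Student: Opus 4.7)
The plan is to unfold the definitions and observe that the statement is essentially a direct consequence of the contravariance between the ``richness'' of a preorder and the ``richness'' of its Alexandrov topology: adding more comparabilities imposes more constraints on what counts as an up-set, hence yields a coarser topology.

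Concretely, I would start by picking an arbitrary $U \in \tau_A^\precsim$ and recalling that, by the definition of the Alexandrov topology, this simply means that $U$ is a $\precsim$-up-set: for all $x,y \in X$, if $x \in U$ and $x \precsim y$, then $y \in U$. The goal is then to verify that the same set $U$ is a $\sqsubseteq$-up-set, which will place it in $\tau_A^\sqsubseteq$.

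To do this, I would take arbitrary $x, y \in X$ with $x \in U$ and $x \sqsubseteq y$. The hypothesis $\sqsubseteq\,\subseteq\,\precsim$ immediately yields $x \precsim y$, and then the fact that $U$ is a $\precsim$-up-set forces $y \in U$. Thus $U$ is closed upward under $\sqsubseteq$, i.e., $U \in \tau_A^\sqsubseteq$. Since $U$ was arbitrary, this gives the desired inclusion $\tau_A^\precsim \subseteq \tau_A^\sqsubseteq$.

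There is no real obstacle here; the argument is a one-line check once the definitions are unpacked. The only thing worth flagging is that the direction of the inclusion may look counterintuitive at first glance — a finer preorder yields a coarser Alexandrov topology — so I would perhaps add a short sentence interpreting this monotonicity property, as it will be the feature used in the sequel to contrast Alexandrov topologies with the Upper and Scott topologies (which, as Example~\ref{Ex1} suggests, do not enjoy this kind of clean monotonic behavior).
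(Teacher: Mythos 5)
Your proof is correct, but it takes a genuinely different and more elementary route than the paper's. You simply unfold the definition: every $U\in\tau_A^\precsim$ is a $\precsim$-up-set, and since $\sqsubseteq\,\subseteq\,\precsim$, any $x\in U$ with $x\sqsubseteq y$ gives $x\precsim y$ and hence $y\in U$, so $U$ is a $\sqsubseteq$-up-set and lies in $\tau_A^\sqsubseteq$. The paper instead argues by contradiction through net convergence: it recasts the inclusion of topologies as the statement that every net converging in $\tau_A^\sqsubseteq$ also converges in $\tau_A^\precsim$, assumes a net converging to $x$ in the former but not the latter, extracts a subnet $(x_j)$ with $\neg(x\precsim x_j)$ (using that the minimal open neighbourhood of $x$ in an Alexandrov topology is the weak upper contour set), deduces $\neg(x\sqsubseteq x_j)$, and contradicts convergence in $\tau_A^\sqsubseteq$. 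Both arguments are sound and rest on the same one-line implication $x\sqsubseteq y\Rightarrow x\precsim y$; your direct verification is shorter and avoids the net machinery altogether, while the paper's formulation in terms of convergent nets matches the style of its subsequent example showing that the Upper and Scott topologies lack this monotonicity. Your closing remark on the contravariance (a finer preorder yields a coarser Alexandrov topology) is a worthwhile addition.
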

\begin{proof}
The inclusion  $\tau_A^\precsim\subseteq \tau_A^\sqsubseteq$ holds true if and only if any convergent net $(x_i)_{i \in I}$ on $(X,\tau_A^\sqsubseteq)$ also converges on $(X,\tau_A^\precsim)$.
By reduction to the absurd, suppose there is a net  $(x_i)_{i \in I}$ that converges to $x$ on  $(X,\tau_A^\sqsubseteq)$ but that fails to converge on $(X,\tau_A^\precsim)$. Thus, there exists an open neighbourhood  $U\in \tau_A^\precsim$ with $x\in U$ such that  $(x_i)_{i \in I}$ is not cofinally in $U$. Since the open sets are the up-sets, that means that  there is a subnet $(x_j)_{j\in J\sqsubseteq I}$ of  $(x_i)_{i \in I}$ such that $\neg (x\precsim x_j)$. Therefore, it also holds true that  $\neg (x\sqsubseteq x_j)$ so,  we have that $x\in U_{\sqsubseteq }(x)\in \tau_A^\sqsubseteq$ as well as $x_j\notin U_{\sqsubseteq }(x)$ (for any $j\in J$). Thus, the subnet $(x_j)_{j\in J}$ fails to converge to $x$ on  $(X,\tau_A^\sqsubseteq)$, which contradicts the hyphothesis\footnote{Here, it is used that any subnet of a convergent net converges to the same point.}.
\end{proof}

This property is not satisfied in general by the Upper nor the Scott topologies. The following example is devoted to see that.

\begin{example}
Let $X$ be the 
union between $X_1=[0, +\infty)_1$ and $X_2=[0, +\infty)_2$. 
As in Example~\ref{Ex1}, we define the preorder $\sqsubseteq $ on $X$ by $x\sqsubseteq y $ if and only if $x,y\in X_k$ and $x\leq y$, with $k=1,2$. Hence, $x $ and $y$ are incomparable for any $x\in X_m$ and $y\in X_n$ with $n\neq m$.

Now we define a preorder $\precsim$ which refines the previous one as follows:

$x\precsim y \iff \left\{  \begin{array}{lcl}
 x\sqsubseteq y  &;& \, x, y\in X,\\
  x\in[0,1)_1 &;&\, y\in [1,+\infty)_2, \\
   x\in[0,1)_2 &;&\, y\in X_1,
\end{array}\right.\medskip$\\

Then, the lower set $L_\precsim (2_2)$ is closed on $\tau_u^\precsim$ (in other words, $X\setminus L_\precsim (2_2)$ is  open) whereas it is not in $\tau_u^\sqsubseteq$. Hence, $\tau_u^\sqsubseteq$ cannot be finer than $\tau_u^\precsim$. Notice too that $1_1\notin L_\precsim (2_2)$, but it is contained in the clousure $\overline{ L_\precsim (2_2)}$ with respect to $\tau_u^{\sqsubseteq}$, thus, $L_\precsim (2_2)$ is not closed on $\tau_u^\sqsubseteq$.

\begin{figure}[htbp]
\begin{center}
\begin{tikzpicture}[scale=0.8]
\draw[thick] (-3.5,0.2) node[anchor=east] {$\mathbb{R}_1$};
    \draw[] (2.5,0) -- (5,0);
\draw[dashed] (5,0) -- (6,0);
  \draw[] (2.5,-2) -- (5,-2); 
\draw[dashed] (5,-2) -- (6,-2);
\draw[thick] (2.9,-1.7) node[anchor=east] {\small $1_2$};
\draw[thick] (2.55,-2) node[anchor=east] { $\bullet$};
\draw[thick] (2.5,-2) node[anchor=east] {\small $[$};

\draw[thick] (0,0.3) node[anchor=east] {\small $0_1$};
\draw[thick] (-0.3,0) node[anchor=east] { $\bullet$};
\draw[thick] (-0.35,0) node[anchor=east] {\small $[$};

\draw[thick] (1.35,0) node[anchor=east] {\small $)$};
\draw[thick] (2.5,0) node[anchor=east] {\small $[$};

\draw[thick] (-2.7,-2) node[anchor=east] { $\bullet$};
\draw[thick] (-2.55,-1.7) node[anchor=east] {\small $0_2$};

\draw[thick] (-2.75,-2) node[anchor=east] {\small $[$};
\draw[thick] (-1.35,-2) node[anchor=east] {\small $)$};

\draw[dashed] (1.15,0) -- (2.3,-2);
\draw[dashed] (1,0) -- (2.5,0);

 \draw[] (-0.5,0) -- (1,0);
  \draw[] (-3,-2) -- (-1.5,-2); 

\draw[thick] (2.9,0.3) node[anchor=east] {\small $1_1$};
\draw[thick] (2.55,0) node[anchor=east] { $\bullet$};

\draw[dashed] (-1.5,-2) -- (-0.5,0);

\draw[thick] (-3.5,-1.8) node[anchor=east] {$\mathbb{R}_2$};

\draw[dashed] (-1.5,-2) -- (2.5,-2);
\end{tikzpicture}
\caption{Preorder defined on $[0,+\infty)_1\cup [0, +\infty)_2$.}
\end{center}
\end{figure}

\end{example}

Therefore, from  Lemma \ref{Lgiltza} the following corollary arises:

\begin{corollary}
Let $\precsim$ be a preorder defined on a topological space $(X,\tau)$. If $\tau$ is finer than the corresponding Alexandrov's topology $\tau_A^{\precsim}$, then any linear extension of the preorder is lower-semicontinuous on $\tau$.
\end{corollary}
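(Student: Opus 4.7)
The plan is to chain together three inclusions of topologies that have all been established earlier in the paper, so the argument is essentially a bookkeeping exercise.

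First I would fix a linear extension $\precsim'$ of $\precsim$ and recall what needs to be shown: by the Corollary after Theorem~\ref{rgt}, establishing that $\precsim'$ is $\tau$-lower semicontinuous is equivalent to showing $\tau_u^{\precsim'} \subseteq \tau$, where $\tau_u^{\precsim'}$ is the Upper topology associated to $\precsim'$. So the whole task reduces to producing this inclusion.

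Next I would build the chain. Since $\precsim'$ is a linear extension, by definition $\precsim \,\subseteq\, \precsim'$, i.e.\ $\precsim'$ refines $\precsim$. Lemma~\ref{Lgiltza} then gives $\tau_A^{\precsim'} \subseteq \tau_A^{\precsim}$. Combining this with the straightforward observation (made in the paragraph just before Example~\ref{Ex1}) that the Upper topology is always contained in the Alexandrov topology of the same preorder, we get
$$\tau_u^{\precsim'} \,\subseteq\, \tau_A^{\precsim'} \,\subseteq\, \tau_A^{\precsim}.$$
Finally, invoking the hypothesis $\tau_A^{\precsim} \subseteq \tau$ closes the chain to $\tau_u^{\precsim'} \subseteq \tau$, which by the first paragraph is exactly the lower semicontinuity of $\precsim'$ on $(X,\tau)$.

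There is no substantive obstacle: once the reformulation "lower semicontinuity $\iff$ $\tau$ refines the Upper topology" is in hand, the corollary is an immediate consequence of Lemma~\ref{Lgiltza} together with the general inclusion $\tau_u \subseteq \tau_A$ for a single preorder. The only point worth emphasizing in the write-up is that Lemma~\ref{Lgiltza} is applied with the roles $\sqsubseteq \,\leftrightarrow\, \precsim$ and $\precsim \,\leftrightarrow\, \precsim'$, since it is the finer relation whose Alexandrov topology shrinks.
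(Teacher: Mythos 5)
Your argument is correct and is precisely the derivation the paper intends: the paper states the corollary ``arises from Lemma~\ref{Lgiltza}'' without writing out details, and your chain $\tau_u^{\precsim'} \subseteq \tau_A^{\precsim'} \subseteq \tau_A^{\precsim} \subseteq \tau$, combined with the characterization of lower semicontinuity as refinement of the Upper topology, fills in exactly those details with the roles in the Lemma assigned correctly. Nothing further is needed.
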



In order to show that the Alexandrov's topology is not strange at all, we include the following result that shows that some authors  {\color{red} CITE} have already work on this spaces (at least in a subset of the corresponding set, maybe inconsciently) when they worked under the  assumption that any linear extension is lower semicontinuous.

\begin{theorem}
Let $\precsim$ be a preorder defined on a topological space $(X,\tau)$. Assume there is chain $(C, \precsim)$ included in $(X, \precsim)$ and an element $x\in X$ such that $x\bowtie c$ for any $c\in C$. If any linear extension of the preorder is lower-semicontinuous in $\tau$, then the reduction of $\tau$ to $C$ (that is, $\tau_{|C}$) is finer that the corresponding Alexandrov's topology $\tau_A^{\precsim}$ on $C$.
\end{theorem}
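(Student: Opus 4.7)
The plan is to prove the inclusion $\tau_A^{\precsim}|_C \subseteq \tau_{|C}$ by showing that every up-set $A$ of the chain $(C, \precsim|_C)$, which is a basic open set of $\tau_A^{\precsim}|_C$, arises as the trace on $C$ of a $\tau$-open subset of $X$. The strategy is to realise $A$ as $C \cap (X \setminus L_{\precsim^{*}}(x))$ for a carefully chosen linear extension $\precsim^{*}$ of $\precsim$ that places $x$ in the Dedekind cut of $C$ determined by $A$; the standing hypothesis then forces $L_{\precsim^{*}}(x)$ to be $\tau$-closed, and the desired open set follows immediately.

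Given an up-set $A \subseteq C$, I would first augment $\precsim$ by adjoining the pairs $(c,x)$ for $c \in C \setminus A$ and $(x,c)$ for $c \in A$, and let $\precsim'$ denote the reflexive-transitive closure of the resulting relation. The crucial step is to verify that $\precsim'$ is a \emph{strict} preorder extension of $\precsim$, i.e.\ that $a \prec b$ in $\precsim$ never forces $b \precsim' a$. Because $x \bowtie c$ for every $c \in C$, no $\precsim$-chain uses $x$ as an internal vertex within $C$, so any hypothetical $\precsim'$-path from $b$ to $a$ must traverse at least one new edge and hence visit $x$. Analysing the $\precsim$-sub-paths delimited by the visits to $x$, the chain structure of $C$ combined with the up-set property of $A$ systematically produces a pair $c_1 \in C \setminus A$ and $c_2 \in A$ with $c_1 \precsim c_2$ in $\precsim$; this forces $c_1 \prec c_2$, and in turn $b \prec a$, contradicting $a \prec b$.

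Once $\precsim'$ is established as a strict preorder extension of $\precsim$, the preorder version of Szpilrajn's extension theorem yields a total preorder $\precsim^{*}$ on $X$ extending $\precsim'$ (and hence $\precsim$), still satisfying $c \prec^{*} x$ for every $c \in C \setminus A$ and $x \prec^{*} c$ for every $c \in A$. By hypothesis, $\precsim^{*}$ is $\tau$-lower semicontinuous, so $L_{\precsim^{*}}(x)$ is $\tau$-closed. Setting $U := X \setminus L_{\precsim^{*}}(x) = \{t \in X : x \prec^{*} t\}$, one reads off directly from the construction that $U \cap C = A$, and therefore $A \in \tau_{|C}$, which establishes the claim.

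The main obstacle I anticipate is the combinatorial verification in the second step: showing that the naive augmentation of $\precsim$ by the edges through $x$ does not introduce any hidden cycle that would collapse a strict inequality of $\precsim$. This is precisely the point at which all three structural hypotheses must be used together, namely the chain structure of $C$ (so that any two elements of $A$ and $C \setminus A$ are comparable in $\precsim$), the up-set property of $A$ (so that the sides of the cut are consistent with inserting $x$ in between), and the $\bowtie$-incomparability of $x$ with the whole of $C$ (so that $\precsim$-paths cannot silently enter or leave $x$ through $C$).
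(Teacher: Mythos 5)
Your proposal is correct, and it rests on the same underlying mechanism as the paper's proof: since $x\bowtie c$ for all $c\in C$, one can use a Szpilrajn-type extension to insert $x$ at a chosen position of the chain, and then the assumed lower semicontinuity of that linear extension makes $L_{\precsim^{*}}(x)$ closed, whose trace on $C$ is the desired down-set. The difference is where $x$ gets inserted. The paper only inserts $x$ immediately above or immediately below a single element $c_0\in C$, obtaining that the principal down-sets $L_{\precsim}(c_0)\cap C$ and $L_{\prec}(c_0)\cap C$ are closed in $\tau_{|C}$, and then asserts that ``any down set is closed.'' That last step is actually a gap: a down-set of a chain corresponding to a cut with no endpoint (e.g.\ $\{q\in\mathbb{Q}: q<\sqrt{2}\}$ in $C=\mathbb{Q}$) is an arbitrary union of principal down-sets, and arbitrary unions of closed sets need not be closed. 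Your version inserts $x$ into the Dedekind cut determined by an \emph{arbitrary} up-set $A\subseteq C$, so it produces every down-set of $C$ directly as $C\cap L_{\precsim^{*}}(x)$ and thereby repairs that gap. Your combinatorial verification is also sound: any path through $x$ in the augmented relation must pass from some $c_1\in C\setminus A$ to some $c_2\in A$, the chain and up-set hypotheses force $c_1\prec c_2$, and the incomparability of $x$ with $C$ rules out the degenerate cases where the endpoints of the path are $x$ itself; hence no strict inequality of $\precsim$ is collapsed and Szpilrajn applies. In short, your route is the same idea carried out at the right level of generality, and it is the more complete of the two arguments.
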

\begin{proof}
First, since $x\bowtie c$ for any $c\in C$, for a given $c_0\in C$ 
we can define an extension $\precsim_1$ of the preorder but now imposing that $c_0\prec_1 x$ and including the corresponding transitive clousure. Dually, we define the extension $\precsim_2$ of the preorder  imposing that $c_0\prec_2 x$.

By Szpilrajn extension theorem \cite{Spi},  there exists a linear extension $\leq_1^{c_0}$ such that  $c_0<_1^{c_0} x$ and $x<_1^{c_0} c'$ for any $c'\in C$ with $c_0\prec c'$. Anolagously, there exists another linear extension $\leq_2^{c_0}$ such that $x<_2^{c_0} c_0$ and $c'<_2 x$ for any $c'\in C$ with $c'\prec c_0$. That is, we can embed $x$ in any desired point $c$ of $C$, achieving two linear orders on $C\cup\{x\}$: $\leq_1^{c}$ and $\leq_2^{c}$.

Since, by hypothesis, any linear extension of the preorder is lower-semicontinuous in $\tau$, we deduce that the subsets $L_{\leq_1}(x)=L_{\leq_1}(c_0)\cup \{x\}$ and  $L_{\leq_2}(x)=L_{<_2}(c_0)$ are closed in $\tau$. Hence, restricting to $C$, it holds that both $L_{\precsim}(c_0)$ and  $L_{\prec}(c_0)$ are closed in $\tau_{|C}$, and that will hold for any $c_0\in C$. Thus, any down set is closed in $(C, \tau_C)$, so we conclude that $\tau_C$ is finer than the corresponding Alexandrov topology.
\end{proof}







\section{Further comments}
In the present paper the authors have focused on semicontinuous finite Richter-Peleg multi-utility. As we said in Section~\ref{s2}, there is a theorem of Evren and Ok \cite{EvO} that characterizes the existence of a semicontinuous finite multi-utility under the assumption of near-completness and second countability.

In a previous paper, some of the authors of the present work studied the idea of \emph{partial representability}. In order to illustrate some of these ideas, they introduced some examples. One of these examples was commented in a final remark as a possible counterexample for the aforementioned Theorem~\ref{Tok}, however, that was not correct at all. The mistake lies in the fact that the example fails to satisfy the hypothesis of   Theorem~\ref{Tok}, hence, it is not a counterexample. In particular, the lower contour set $L_{\precsim}(0'5)$ of the example fails to be closed.
Therefore, (and after a deeper study of the  proof) the authors believe that the mentioned theorem given by Evren and Ok is correct.




\section{Conclusions} \label{s6}

After this work, we conclude that if we want to search for lower  semicontinuous and finite Richter-Peleg multi-utilities, it is necessary to start the study from topological spaces that refine the corresponding Scott topology, and not --as usual-- from Upper topologies. We also show that under some hypothesis assumed in the literature (precisely, under the  assumption that any linear extension is lower semicontinuous) the topology of the space has a strong relation with the Alexandrov topology.



\end{document}